\numberwithin{equation}{section}
\newtheorem{thm}{Theorem}[section]
\newtheorem{lem}{Lemma}[section]
\newtheorem{cor}{Corollary}[section]
\theoremstyle{definition}
\newtheorem{defn}{Definition}[section]
\theoremstyle{remark}
\newtheorem{rem}{Remark}[section]
\begin{document}

\title{Decay Estimates for Isentropic Compressible Navier-Stokes Equations
in Bounded Domain }

\author{Daoyuan Fang\thanks{Email: dyf@zju.edu.cn}, Ruizhao
Zi\thanks{Email: ruizhao3805@163.com}, Ting Zhang\thanks{Email:
zhangting79@zju.edu.cn}\\\textit{\small Department of Mathematics,
Zhejiang University, Hangzhou 310027, China}}

\date{}
\maketitle

\begin{abstract}
In this paper, under the hypothesis that $\rho$ is upper bounded, we
construct a Lyapunov functional for the multidimensional isentropic
compressible Navier-Stokes equations and show that the weak
solutions decay exponentially to the equilibrium state in $L^2$
norm. This can be regarded as a generalization of Matsumura and
Nishida's results in \cite{Matsumura 3}, since our analysis is done
in the framework of Lions \cite{Lions 2} and Feireisl et al.
\cite{Feireisl 3}, the higher regularity of $(\rho, u)$ and the
uniformly positive lower bound of $\rho$ are not necessary in our
analysis and vacuum may be admitted. Indeed, the upper bound of the
density $\rho$ plays the essential role in our proof.\\
\textbf{Keywords: }Compressible Navier-Stokes equations; decay
estimates
\end{abstract}

\section{Introduction}
This paper is devoted to the asymptotic behavior of the solutions to
the Navier-Stokes equations of an isentropic compressible fluid:
\begin{eqnarray}\label{1.1}
\left\{ \begin{array}{ll}
\rho_t+\textrm{div}(\rho u)=0,\\[3mm]
(\rho u)_t+\textrm{div}(\rho u\otimes u)+\nabla P(\rho)=\mu\Delta
u+(\lambda+\mu)\nabla \textrm{div} u,
 \end{array}
 \right.
\end{eqnarray}
where the density $\rho=\rho(t,x)$ and the velocity
$u=(u^1(t,x),u^2(t,x),\cdots,u^N(t,x))$ are functions of the time
$t\in(0,\infty)$ and the spatial coordinate $x\in \Omega$ where
$\Omega\subset\mathbb{R}^N$, $N\geq2$, is a bounded regular domain.
$P(\rho)=a\rho^\gamma$ is the pressure, with $a>0$ and $\gamma>1$
being two positive constants. Since the constant $a$ does not play
any role in the analysis, we assume henceforth that $a=1$. The
constants $\mu$ and $\lambda$ are viscosity coefficients satisfying
$$
\mu>0,\ \lambda+\frac{2}{N}\mu\geq0.
$$
We prescribe the initial conditions for the density and momenta:
\begin{eqnarray}\label{1.2}
\rho(0)=\rho_0,\ (\rho u)(0)=m_0,
\end{eqnarray}
together with the no-slip boundary conditions for the velocity:
\begin{eqnarray}\label{1.3}
u|\partial\Omega=0.
\end{eqnarray}

Some of the previous works in this direction can be summarized as
follows. The first general result on weak solutions to the
multidimensional isentropic compressible Navier-Stokes equations
with large initial data was obtained by Lions in \cite{Lions 2}, in
which he used the renormalization skills introduced by DiPerna and
Lions in \cite{DiPerna} to obtain global weak solutions provided
that the specific heat ratio $\gamma$ is appropriately large, for
example, $\gamma\geq3N/(N+2),N=2,3$. Later, Feireisl,  Novotn\'{y}
and Petzeltov\'{y} \cite{Feireisl 3} improved  Lions's result to the
case $\gamma>\frac{N}{2}$. If the initial data was assumed to have
some symmetric properties, Jiang and  Zhang \cite{Jiang 1, Jiang 2}
obtained the global weak solutions for any $\gamma>1$. For the full
Navier-Stokes equations,  Feireisl and  Petzeltov\'{a} established
the variational solutions, see \cite{Feireisl 4}, for example.

Concerning the large time behavior of solutions to the
initial-boundary value problem (\ref{1.1})-(\ref{1.3}), by using the
weak convergence method,  Feireisl and  Petzeltov\'{a}
\cite{Feireisl 1} proved the weak solutions to the problem
(\ref{1.1})-(\ref{1.3}) with a gradient external force $\nabla F$
independent of time $t$ converge to stationary solution $(\rho_s,
0)$ in the following sence
$$
\rho(t)\rightarrow\rho_s\ \textrm{strongly\  in}\ L^\gamma(\Omega),\
\textrm{ess}\
\sup_{\tau>t}\int_\Omega\rho(\tau)|u(\tau)|^2dx\rightarrow0,\
\textrm{as}\ t\rightarrow\infty,
$$
where the domain $\Omega$ need not to be bounded and the initial
data need not to be close to the equilibrium state. If the initial
data is close to the equilibrium state, there are many results on
the problem of large time behavior of global smooth solutions to the
compressible Navier-Stokes equations (of heat-conducting flow). When
there is no external or internal force involved, the $H^s$ global
existence and time-decay rate of strong solutions are obtained in
whole space $\mathbb{R}^3$ first by Matsumura and Nishida \cite{Matsumura 1,Matsumura 2}
and the optimal $L^p \ (p\geq2)$ decay rate is established by Ponce
\cite{Ponce}. The large time decay rate of global solution in
multi-dimensional half space or exterior domain is also investigated
for the compressible Navier-Stokes equations by Kagei and Kobayashi
\cite{Kagei 1, Kagei 2}, Kobayashi and Shibata \cite{Kobayashi 1},
and Kobayashi \cite{Kobayashi 2}. Therein, the optimal $L^2$
time-decay rate in three dimension is established as
$$
\|(\rho-\tilde{\rho},u)(t)\|_{L^2(\mathbb{R}^3)}\leq C(1 +
t)^{-\frac{3}{4}},
$$
with $( \tilde{\rho}, 0)$ the constant state, under small initial
perturbation in Sobolev space. When additional (exterior or
internal) potential force is taken into account, the global
existence of a strong solution and convergence to steady state are
investigated by Matsumura and Nishida \cite{Matsumura 4} and many
other authors \cite{Deckelnick 1, Deckelnick 2,  Shibata 1, Shibata
2, Ukai}.
 The optimal $L^p$ convergence rate in
$\mathbb{R}^3$ is established by Duan et al. \cite{Duan} for the
non-isentropic compressible flow as
$$
\|(\rho-\tilde{\rho}, u, \theta -\theta_\infty)(t)
\|_{L^p(\mathbb{R}^3)} \leq C(1 + t)^{- \frac32 (1-\frac1p)}, \
2\leq p\leq 6,
$$
where $( \tilde{\rho}, 0, \theta_\infty)$ is related to the
steady-state solution, under the same smallness assumptions on
initial perturbation and the external force. If $\Omega$  is a
smooth bounded domain in $\mathbb{R}^3$, based on high order energy
estimates, Matsumura and Nishida \cite{Matsumura 3} proved that, for
large times, the solution decays exponentially to a unique
equilibrium state. However, for the one dimensional case, the
smallness assumption on the initial data and force can be removed.
Indeed, by constructing suitable Lyapunov functionals, the decay
rate estimates in $L^2$ norm and $H^1$ norm are established by
Stra\u{s}kraba and Zlotnik \cite{Straskraba 1}, and the decay is
exponential if so the decay rate of the nonstationary part of the
mass force is.

In this paper, under the hypothesis that $\rho$ is upper bounded, we
construct a Lyapunov functional for the multidimensional isentropic
compressible Navier-Stokes equations (\ref{1.1}) with the aid of the
operator $\mathcal{B}$ introduced by Bogovskii \cite{Bogovskii} (cf.
Lemma \ref{2.5}). Based on this, we show that the weak solutions to
problem (\ref{1.1}) decay exponentially to the equilibrium state in
$L^2$ norm. The ideas mainly come from \cite{Straskraba 1}, however,
unlike \cite{Straskraba 1}, we do not divide by $\rho$ on both side
of $(\ref{1.1})_2$. As a result, the uniformly positive lower bound
of $\rho$ is not necessary in our analysis and vacuum may be
admitted. Compared with \cite{Matsumura 3}, our analysis follows the
framework of Lions \cite{Lions 2} and Feireisl et al. \cite{Feireisl
3}, and thus the higher regularity of $(\rho, u)$ is not necessary
here. Actually, the upper bound of the density $\rho$ plays the
essential role in our proof. Coincidentally, a blow-up criterion for
the 3D compressible Navier-Stokes equations was given in terms of
the upper bound of the density $\rho$ by Sun et al. \cite{Sun},
however, their result does not contain the case for spatial
dimension $N>3$.

 Now we give a precise formulation of our result. Let
$\rho_s$ be the solution of the following stationary problem:
\begin{eqnarray}\label{1.4}
\left\{
\begin{array}{l}
\nabla P(\rho_s)=0,\\[3mm]
\displaystyle\int_\Omega \rho_sdx=\int_\Omega\rho_0dx.
\end{array}
\right.
\end{eqnarray}
Then $\displaystyle\rho_s=\frac{1}{|\Omega|}\int_\Omega\rho_0dx$ be
a positive constant.

Formally, the total energy of problem (1.1) can be written as
$$
E(t)=\int_\Omega\frac12\rho(t)|u(t)|^2+\frac{1}{\gamma-1}\rho^\gamma(t)dx,
$$
satisfying the energy inequality
\begin{eqnarray}\label{1.5}
\frac{dE}{dt}+\int_\Omega\mu|\nabla
u|^2+(\lambda+\mu)(\textrm{div}u)^2dx\leq0.
\end{eqnarray}

The definition of weak solutions to the problem
(\ref{1.1})-(\ref{1.3}) is given as follows:

\begin{defn}[\textbf{finite energy weak solutions},\cite{Lions 2, Feireisl 3}]
A pair of function $(\rho,u)$ will be termed a finite energy weak
solution of the problem (1.1), (1.3) on $(0,\infty)\times\Omega$, if

\begin{itemize}
  \item $\rho\geq0, \rho\in L^\infty_{loc}(0,\infty;L^\gamma(\Omega)),
u\in L^2_{loc}(0,\infty;W_0^{1,2}(\Omega))$.
  \item The equations (1.1) are satisfied in
$\mathscr{D}'((0,\infty)\times\Omega)$; moreover, $(1.1)_1$ holds in
$\mathscr{D}'((0,\infty)\times\mathbb{R}^N)$ provided $\rho, \ u$
were prolonged to be zero on $\mathbb{R}^N\backslash\Omega$.
  \item The energy inequality (1.4) holds in $\mathscr{D}'(0,\infty)$.
  \item The equality $(1.1)_1$ holds in the sense of  renormalized
solutions, more precisely, the following equation
$$
b(\rho)_t+\textrm{div}(b(\rho)u)+(b'(\rho)\rho-b(\rho))\textrm{div}u=0
$$
holds in $\mathscr{D}'((0,\infty)\times\Omega)$ for any $b\in
C^1(\mathbb{R})$ such that
$$
b'(z)=0 \ for \ all\ z\in\mathbb{R}\  \textrm{large\  enough,\ say},
|z|\geq M,
$$
where the constant $M$ may vary for different functions $b$.
\end{itemize}
\end{defn}

The paper is mainly concerned with the proof of the following
theorem.

\begin{thm}\label{thm1}
Let $\Omega$ be a bounded  Lipschitz domain in $\mathbb{R}^N$,
$N\geq2$, $(\rho, u)$ be a finite energy weak solution to the
problem (\ref{1.1})-(\ref{1.3}) and $\rho_s$ be the solution to the
stationary problem (\ref{1.4}). In addition, if we assume $\rho$ is
upper bounded, i.e., there exists constant $\bar{\rho}>0$, such that
$$
\rho\leq\bar{\rho},\ a.e.\ (t,x)\in(0,\infty)\times\Omega
$$
Then
\begin{equation}\label{1.6}
\int_\Omega\rho|u|^2+(\rho-\rho_s)^2dx\leq C(E_0,
\bar{\rho})\exp\{-C(\bar{\rho},\Omega)t\} \ for\ a.e.\
t\in(0,\infty),
\end{equation}
where
$$
E_0=\int_\Omega\frac{|m_0|^2}{2\rho_0}+\frac{1}{\gamma-1}\rho^\gamma_0dx.
$$
\end{thm}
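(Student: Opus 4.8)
The plan is to turn the velocity dissipation furnished by the energy inequality \eqref{1.5} into joint dissipation of $\nabla u$ and the density fluctuation $\rho-\rho_s$, via a Lyapunov functional built from the relative energy and a Bogovskii correction. First I would replace the total energy by the relative energy
\begin{equation*}
\mathcal{E}(t)=\int_\Omega\tfrac12\rho|u|^2+\Pi(\rho)\,dx,\qquad
\Pi(\rho)=\tfrac{1}{\gamma-1}\big(\rho^\gamma-\rho_s^\gamma\big)-\tfrac{\gamma}{\gamma-1}\rho_s^{\gamma-1}(\rho-\rho_s).
\end{equation*}
Since the total mass is conserved, $\int_\Omega(\rho-\rho_s)\,dx=0$, so the linear term integrates to zero and $\mathcal{E}(t)=E(t)-\tfrac{1}{\gamma-1}\rho_s^\gamma|\Omega|$ differs from $E(t)$ by a constant; thus \eqref{1.5} gives $\frac{d\mathcal{E}}{dt}+\mu\int_\Omega|\nabla u|^2\,dx\le0$ (using $\lambda+\mu\ge0$ for $N\ge2$). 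Because $\rho_s>0$ is fixed by the data and $0\le\rho\le\bar\rho$, the ratio $\Pi(\rho)/(\rho-\rho_s)^2$ extends continuously and stays strictly positive on $[0,\bar\rho]$, so $\mathcal{E}(t)$ is comparable, with constants depending only on $\bar\rho,\Omega,\gamma$ and the mass, to $\int_\Omega\rho|u|^2+(\rho-\rho_s)^2\,dx$. This is where the upper bound $\bar\rho$ first enters, and the comparison remains valid down to $\rho=0$, so vacuum is admitted.

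The dissipation in \eqref{1.5} sees only $\nabla u$, so to recover dissipation of $\rho-\rho_s$ I would add a Bogovskii correction. Writing $\mathcal{B}$ for the Bogovskii operator of \cite{Bogovskii}, which is well defined on $\rho-\rho_s$ precisely because $\int_\Omega(\rho-\rho_s)\,dx=0$, put
\begin{equation*}
I(t)=\int_\Omega\rho u\cdot\mathcal{B}[\rho-\rho_s]\,dx,\qquad
\mathcal{L}(t)=\mathcal{E}(t)-\delta I(t),
\end{equation*}
with $\delta>0$ small to be fixed. Differentiating $I$ and substituting $(\ref{1.1})_2$ for $\partial_t(\rho u)$ and $(\ref{1.1})_1$ for $\partial_t\rho$, the decisive contribution comes from the pressure: since $\operatorname{div}\mathcal{B}[\rho-\rho_s]=\rho-\rho_s$ and $\mathcal{B}[\rho-\rho_s]$ vanishes on $\partial\Omega$, integration by parts and mass conservation give
\begin{equation*}
-\int_\Omega\nabla P(\rho)\cdot\mathcal{B}[\rho-\rho_s]\,dx=\int_\Omega\big(P(\rho)-P(\rho_s)\big)(\rho-\rho_s)\,dx.
\end{equation*}
For $0\le\rho\le\bar\rho$ the secant slope $(\rho^\gamma-\rho_s^\gamma)/(\rho-\rho_s)$ is bounded below by a positive constant, so this term dominates $c_3(\bar\rho,\Omega)\int_\Omega(\rho-\rho_s)^2\,dx$, the sought density dissipation.

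The remaining contributions to $\frac{dI}{dt}$ I expect to absorb using $\rho\le\bar\rho$, the $L^p$-boundedness of $\nabla\mathcal{B}$, and Poincaré/Sobolev. The viscous terms yield $\int_\Omega\nabla u:\nabla\mathcal{B}[\rho-\rho_s]$ and $\int_\Omega\operatorname{div}u\,(\rho-\rho_s)$, controlled by $\varepsilon\int(\rho-\rho_s)^2+C_\varepsilon\int|\nabla u|^2$; the term from $\partial_t\rho=-\operatorname{div}(\rho u)$, namely $\int_\Omega\rho u\cdot\mathcal{B}[\operatorname{div}(\rho u)]$, is handled by the sharper estimate for $\mathcal{B}$ on a divergence (valid since $\rho u\cdot n=0$ on $\partial\Omega$) together with $\rho\le\bar\rho$ and Poincaré, giving $C(\bar\rho,\Omega)\int|\nabla u|^2$. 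The genuinely nonlinear convective term $\int_\Omega(\rho u\otimes u):\nabla\mathcal{B}[\rho-\rho_s]$ is where the upper bound is decisive: bounding $\rho\le\bar\rho$, using $W_0^{1,2}(\Omega)\hookrightarrow L^q(\Omega)$ (with $q=2^*$ for $N\ge3$ and any $q<\infty$ for $N=2$) to get $\|u\|_{L^q}^2\le C\|\nabla u\|_{L^2}^2$, and $\|\nabla\mathcal{B}[\rho-\rho_s]\|_{L^{r}}\le C\|\rho-\rho_s\|_{L^{r}}\le C(\bar\rho,\Omega)$ with the conjugate exponent $r$, one again obtains $C(\bar\rho,\Omega)\int|\nabla u|^2$, uniformly in $N$ and with no smallness on the data. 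Collecting everything, $\frac{dI}{dt}\ge\tfrac{c_3}{2}\int(\rho-\rho_s)^2-C(\bar\rho,\Omega)\int|\nabla u|^2$. Choosing $\delta$ small enough that $\tfrac12\mathcal{E}\le\mathcal{L}\le\tfrac32\mathcal{E}$ and $\delta\,C(\bar\rho,\Omega)\le\mu/2$, I would combine this with the inequality for $\mathcal{E}$ to get
\begin{equation*}
\frac{d\mathcal{L}}{dt}+\frac{\mu}{2}\int_\Omega|\nabla u|^2\,dx+\frac{\delta c_3}{2}\int_\Omega(\rho-\rho_s)^2\,dx\le0 .
\end{equation*}
Since $u\in W_0^{1,2}$, Poincaré gives $\int\rho|u|^2\le\bar\rho C_P\int|\nabla u|^2$, so the dissipation controls $\int\rho|u|^2+(\rho-\rho_s)^2$ and hence $\mathcal{L}$ itself, whence $\frac{d\mathcal{L}}{dt}\le-c_4(\bar\rho,\Omega)\mathcal{L}$; Grönwall, $\mathcal{L}(0)\le\tfrac32\mathcal{E}(0)\le\tfrac32 E_0$, and the comparability then give \eqref{1.6}.

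The main obstacle is rigor rather than the formal scheme. For a finite energy weak solution, $\rho u\cdot\mathcal{B}[\rho-\rho_s]$ is not a priori differentiable in $t$, so $\frac{dI}{dt}$ must be justified by using $\mathcal{B}[\rho-\rho_s]$ as a test function in the weak momentum equation and invoking the renormalized continuity equation to treat $\partial_t\mathcal{B}[\rho-\rho_s]=\mathcal{B}[\partial_t\rho]$; the differential inequalities should be read in integrated, distributional form before Grönwall is applied. Care is also needed to keep every constant dependent only on $\bar\rho$, $\Omega$, $\mu$, $\lambda$, $\gamma$ and the mass (through $\rho_s$), never on a positive lower bound for $\rho$ or on higher regularity, and the uniform-in-$N$ convective estimate is the step most sensitive to this requirement.
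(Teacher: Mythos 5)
Your proposal is correct and follows essentially the same route as the paper: the relative energy $\Pi(\rho)$ you write down is algebraically identical to the paper's $\rho\int_{\rho_s}^{\rho}\frac{h^\gamma-\rho_s^\gamma}{h^2}\,dh$ (compared to $(\rho-\rho_s)^2$ in Lemma \ref{lem6}), your corrector $I(t)=\int_\Omega\rho u\cdot\mathcal{B}[\rho-\rho_s]\,dx$ and Lyapunov functional $\mathcal{L}=\mathcal{E}-\delta I$ coincide with the paper's $V_\sigma$, and the term-by-term estimates (pressure coercivity, Bogovskii bounds, Sobolev embedding for the convective term, Poincar\'e to close the Gr\"onwall argument) match the paper's treatment of $W_\sigma$. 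The rigor issue you flag at the end is resolved in the paper exactly as you suggest, by testing the momentum equation with $\psi(t)\mathcal{B}[[\rho]_\epsilon-\rho_s^\epsilon]$, using the commutator estimates of Lemma \ref{lem3} and Corollary \ref{cor1} to pass $\epsilon\to0$, and mollifying the resulting distributional differential inequality in time before applying Gr\"onwall.
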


\begin{rem}\label{rem2}
A natural question is that whether the solution $(\rho,  u)$ stated
in Theorem \ref{thm1} exists. Indeed, Matsumura and Nishida
\cite{Matsumura 3} obtained the existence of global solution to the
compressible heat-conductive fluid in bounded domain in
$\mathbb{R}^3$ provided the initial data is close to the equilibrium
state. If vacuum is taken into account, Huang, Li and  Xin
\cite{Huang 2} established the global existence and uniqueness of
classical solutions to the Cauchy problem for the isentropic
compressible Navier-Stokes equations in three spatial dimensions
with smooth initial data which are of small energy but possibly
large oscillations with constant state as far field which could be
either vacuum or non-vacuum.

\end{rem}

\begin{rem}\label{rem4}
We believe that our method can be adapted to the other related
models. This is the object of our future work.
\end{rem}
\noindent\textbf{Notations:}
\begin{enumerate}
  \item $\eta_\epsilon(\cdot)=\frac{1}{\epsilon^N}\eta(\frac{\cdot}{\epsilon})$,
where $\eta$ is the standard mollifier in $\mathbb{R}^N$.
  \item $[f]_\epsilon=\eta_\epsilon\ast f$, for any $f\in
L^1_{loc}(\mathbb{R}^N)$.
\end{enumerate}

 The rest of the paper is organized as follows: In section 2, we present some preliminary
 results which will be used later. In section 3, we give the proof
 of Theorem \ref{thm1}.

 .
\section{Preliminaries}
\begin{lem}[\cite{Feireisl 3}]\label{lem1}
 Let $\rho,\ u$ be a solution of $(1.1)_1$ in
$\mathscr{D}'((0,\infty)\times\Omega)$ and such that $\rho\in
L^2((0,\infty)\times\Omega)$ and $u\in
L^2(0,\infty;[W_0^{1,2}(\Omega)]^N) $.

Then, prolonging $\rho,\ u$ to be zero on
$\mathbb{R}^N\backslash\Omega$, the equation $(1.1)_1$ holds in
$\mathscr{D}'((0,\infty)\times\mathbb{R}^N).$
\end{lem}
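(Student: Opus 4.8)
The plan is to upgrade the interior distributional identity
$$\int_0^\infty\!\!\int_\Omega \rho\,\partial_t\varphi + \rho u\cdot\nabla\varphi \,dx\,dt = 0,\qquad \varphi\in C_c^\infty((0,\infty)\times\Omega),$$
to the same identity tested against an arbitrary $\varphi\in C_c^\infty((0,\infty)\times\mathbb{R}^N)$, once $\rho,u$ are extended by zero off $\Omega$. Since the extended $\rho$ and $u$ vanish on $\mathbb{R}^N\setminus\Omega$, every integral over $\mathbb{R}^N$ collapses to an integral over $\Omega$, so the only gap between the two formulations is that a general $\varphi$ need not vanish near $\partial\Omega$. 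I would therefore localize $\varphi$ away from the boundary, apply the known identity, and then remove the localization in the limit.

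Concretely, let $d(x)=\mathrm{dist}(x,\partial\Omega)$ and fix a smooth profile $\chi$ with $\chi\equiv0$ on $[0,1]$, $\chi\equiv1$ on $[2,\infty)$, $0\le\chi\le1$; set $\omega_\delta(x)=\chi(d(x)/\delta)$ (smoothing $d$ if necessary, which is harmless as $\Omega$ is Lipschitz). Then $\omega_\delta$ vanishes near $\partial\Omega$, equals $1$ where $d>2\delta$, and satisfies $|\nabla\omega_\delta|\le C/\delta$ with $\mathrm{supp}\,\nabla\omega_\delta\subset\{\delta<d<2\delta\}$. The product $\varphi\omega_\delta$ is admissible in the interior identity, and since $\omega_\delta$ is time-independent this yields
$$\int_0^\infty\!\!\int_\Omega \rho\,\omega_\delta\,\partial_t\varphi + \rho u\cdot(\omega_\delta\nabla\varphi + \varphi\,\nabla\omega_\delta)\,dx\,dt = 0.$$
Letting $\delta\to0$, the bound $|\omega_\delta|\le1$ together with $\omega_\delta\to1$ pointwise on $\Omega$ lets dominated convergence (using $\rho\in L^2$, $\rho u\in L^1_{loc}$ by Cauchy--Schwarz, and the compact support of $\varphi$) send the first two terms to $\int\!\!\int \rho\,\partial_t\varphi + \rho u\cdot\nabla\varphi$, which is exactly the desired $\mathbb{R}^N$ identity.

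Everything then hinges on showing the remaining term $\int_0^\infty\!\int_\Omega \rho u\cdot\varphi\,\nabla\omega_\delta$ vanishes, and this is where the hypothesis $u\in W_0^{1,2}$ enters. Using $|\nabla\omega_\delta|\le C/\delta\le 2C/d$ on its support and that $\varphi$ is supported in some $[0,T]$, I estimate
$$\Big|\int_0^\infty\!\!\int_\Omega \rho u\cdot\varphi\,\nabla\omega_\delta\Big| \le C\|\varphi\|_\infty \int_0^T\!\!\int_{\{d<2\delta\}} |\rho|\,\frac{|u|}{d}\,dx\,dt.$$
Applying the Hardy inequality $\|u/d\|_{L^2(\Omega)}\le C\|\nabla u\|_{L^2(\Omega)}$, valid on Lipschitz domains for $u\in W_0^{1,2}$, together with Cauchy--Schwarz in space and in time, bounds this by $C\|\varphi\|_\infty\,\|\rho\|_{L^2((0,T)\times\{d<2\delta\})}\,\|\nabla u\|_{L^2((0,T)\times\Omega)}$. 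The $\nabla u$ factor is finite, while the $\rho$ factor tends to $0$ as $\delta\to0$ by absolute continuity of the integral, since the sets $(0,T)\times\{d<2\delta\}$ shrink to measure zero. Hence the boundary term disappears and the proof concludes. The main obstacle is exactly this last step: without the no-slip condition encoded in $u\in W_0^{1,2}$ there is no reason for the boundary flux to vanish, and the Hardy inequality is precisely what converts that condition into the requisite decay of $u$ near $\partial\Omega$.
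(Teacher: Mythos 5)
Your proof is correct. The paper itself gives no argument for this lemma---it is quoted directly from \cite{Feireisl 3}---and your cutoff-plus-Hardy argument is essentially the standard proof from that reference: localize the test function away from $\partial\Omega$, pass to the limit in the main terms by dominated convergence, and use $u\in W_0^{1,2}(\Omega)$ through the Hardy inequality $\|u/d\|_{L^2(\Omega)}\leq C\|\nabla u\|_{L^2(\Omega)}$ (valid on a bounded Lipschitz domain) to show the commutator term supported in the collar $\{\delta<d<2\delta\}$ vanishes as $\delta\to0$.
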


\begin{lem}[\cite{Feireisl 3}]\label{lem2}
 Let $(\rho,u)$ be a finite energy weak solution
of problem (1.1)-(1.3) on the time interval $(0,\infty)$.

Then the total mass
$m[\rho(t)]\doteq\displaystyle\int_\Omega\rho(t)dx$ is conserved,
i.e.,
\begin{eqnarray}\label{2.1}
\int_\Omega\rho(t)dx=\int_\Omega\rho_0dx,
\end{eqnarray}
for all $t\in(0,\infty)$.
\end{lem}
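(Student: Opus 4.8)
The plan is to extract mass conservation directly from the weak (distributional) form of the continuity equation $(1.1)_1$, which by the very definition of a finite energy weak solution holds in $\mathscr{D}'((0,\infty)\times\mathbb{R}^N)$ once $\rho$ and $u$ are extended by zero outside $\Omega$. Concretely this means
\begin{equation*}
\int_0^\infty\int_{\mathbb{R}^N}\big(\rho\,\partial_t\phi+\rho u\cdot\nabla\phi\big)\,dx\,dt=0
\end{equation*}
for every $\phi\in\mathscr{D}((0,\infty)\times\mathbb{R}^N)$. The idea is to insert a separated test function $\phi(t,x)=\psi(t)\chi(x)$, with $\psi\in\mathscr{D}(0,\infty)$ arbitrary and $\chi\in\mathscr{D}(\mathbb{R}^N)$ chosen so that $\chi\equiv1$ on an open neighborhood $U$ of $\overline{\Omega}$.

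First I would observe that the flux term disappears for this choice. Since $\nabla\chi\equiv0$ on $U\supset\overline{\Omega}$ while the extended momentum $\rho u$ vanishes on $\mathbb{R}^N\setminus\Omega$, the product $\rho u\cdot\nabla\chi$ is identically zero on $\mathbb{R}^N$, so no integrability of $\rho u$ near $\partial\Omega$ needs to be checked. What survives, using $\chi\equiv1$ on $\Omega$, is
\begin{equation*}
\int_0^\infty\psi'(t)\Big(\int_\Omega\rho(t)\,dx\Big)\,dt=0\qquad\text{for all }\psi\in\mathscr{D}(0,\infty).
\end{equation*}
Hence the scalar function $t\mapsto m[\rho(t)]=\int_\Omega\rho(t)\,dx$ has vanishing distributional derivative on $(0,\infty)$ and is therefore equal to a constant for almost every $t$.

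The remaining and genuinely delicate point is to upgrade ``constant a.e.'' to ``constant for every $t$'' and to identify the constant with $\int_\Omega\rho_0\,dx$. For this I would invoke the weak-in-time continuity of the density built into the Feireisl--Novotn\'y--Petzeltov\'y framework, namely $\rho\in C([0,\infty);L^\gamma_{\mathrm{weak}}(\Omega))$ with $\rho(0)=\rho_0$. Because $\Omega$ is bounded, the constant function $1$ lies in $L^{\gamma'}(\Omega)$, so $t\mapsto\langle\rho(t),1\rangle=m[\rho(t)]$ is genuinely continuous; an everywhere-continuous function that is a.e. constant is constant, and passing to the limit $t\to0^+$ pins its value to $\int_\Omega\rho_0\,dx$. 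The main obstacle is thus not the cutoff computation but the justification of this temporal trace: one must either take the weak continuity of $\rho$ as part of the solution concept or reconstruct it from the renormalized continuity equation, which is precisely the ingredient that lets one speak of $\int_\Omega\rho(t)\,dx$ at each individual time rather than only in an averaged sense.
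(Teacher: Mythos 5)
The paper offers no proof of this lemma --- it is simply quoted from Feireisl--Novotn\'y--Petzeltov\'a \cite{Feireisl 3} --- but your argument is the standard one from that reference and is correct: the zero extension to $\mathbb{R}^N$ (guaranteed here by the definition of finite energy weak solution together with Lemma \ref{lem1}) plus a cutoff $\chi\equiv 1$ near $\overline\Omega$ kills the flux term, and the weak-in-time continuity $\rho\in C([0,\infty);L^\gamma_{\mathrm{weak}}(\Omega))$ with $\rho(0)=\rho_0$ upgrades ``constant a.e.'' to the stated identity for every $t$. You correctly identify that this last temporal-trace ingredient is the only genuinely nontrivial point and must either be built into the solution concept or recovered from the renormalized continuity equation, exactly as in the cited framework.
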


\begin{lem}[\cite{Lions 1, Feireisl 4}] \label{lem3}
Let $\Omega\subset\mathbb{R}^N$ be a
domain and $\rho\in L^p(\Omega), u\in [W^{1,q}(\Omega)]^N$ be given
functions with $1\leq p,q\leq\infty$ and
$\frac{1}{p}+\frac{1}{q}\leq1$.

Then for any compact $K\subset\Omega$,

(i)
\begin{eqnarray}\label{2.2}
\|[\textrm{div}(\rho u)]_\epsilon-\textrm{div}([\rho]_\epsilon
u)\|_{L^r(K)}\leq c(K)\|\rho\|_{L^p(\Omega)}\|u\|_{W^{1,q}(\Omega)}
\end{eqnarray}
provided $\epsilon$ is small enough, where $\frac
1r=\frac1p+\frac1q$. In addition, if $\Omega=\mathbb{R}^N$, $K$ can
be replaced by $\mathbb{R}^N$.

(ii)
\begin{eqnarray}\label{2.3}
[\textrm{div}(\rho u)]_\epsilon-\textrm{div}([\rho]_\epsilon
u)\rightarrow0 \ \textrm{in}\  L^\theta(K)\ \ as\
\epsilon\rightarrow0,
\end{eqnarray}
where $\frac 1\theta=\frac1p+\frac1q$, if $p<\infty$ and
$1\leq\theta<q$ if $p=\infty$. In addition, if $\Omega=\mathbb{R}^N$
and $p<\infty$, $K$ can be replaced by $\mathbb{R}^N$.
\end{lem}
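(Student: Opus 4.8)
The plan is to reduce everything to a single pointwise identity for the commutator and then run a Friedrichs-type argument. Write $r_\epsilon := [\textrm{div}(\rho u)]_\epsilon - \textrm{div}([\rho]_\epsilon u)$; since $\frac1p+\frac1q\le1$ we have $\rho u\in L^1_{\mathrm{loc}}$, so $\textrm{div}(\rho u)$ and its mollification are well defined. Fix a compact $K\subset\Omega$ and take $\epsilon< \textrm{dist}(K,\partial\Omega)/R$, where $R$ bounds the support of $\eta$, so that for $x\in K$ the ball $B_\epsilon(x)\subset\Omega$ and every convolution below sees only values of $\rho,u$ inside $\Omega$ (no boundary terms arise in the integration by parts). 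Moving the derivative onto the mollifier and using $\textrm{div}([\rho]_\epsilon u)=u\cdot\nabla[\rho]_\epsilon+[\rho]_\epsilon\,\textrm{div}\,u$, I would obtain for a.e.\ $x\in K$
\[ r_\epsilon(x)=\sum_{i}\int(\partial_i\eta_\epsilon)(x-y)\,\rho(y)\,[u^i(y)-u^i(x)]\,dy-[\rho]_\epsilon(x)\,\textrm{div}\,u(x). \]
The decisive point is that the apparent $\epsilon^{-1}$ singularity carried by $\partial_i\eta_\epsilon$ is compensated by the increment $u^i(y)-u^i(x)=O(|x-y|)$. To make this rigorous I would use the Sobolev representation $u^i(x-\epsilon z)-u^i(x)=-\epsilon\int_0^1 z\cdot\nabla u^i(x-s\epsilon z)\,ds$ (valid a.e.\ for $W^{1,q}$) together with the substitution $y=x-\epsilon z$, which recasts the first term as $I_\epsilon(x)=-\sum_{i,j}\int(\partial_i\eta)(z)z_j\int_0^1\rho(x-\epsilon z)\,\partial_j u^i(x-s\epsilon z)\,ds\,dz$, manifestly free of negative powers of $\epsilon$.

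For the bound (i) I would estimate $I_\epsilon$ and $[\rho]_\epsilon\,\textrm{div}\,u$ separately. Young's inequality gives $\|[\rho]_\epsilon\|_{L^p}\le\|\rho\|_{L^p}$, so Hölder with $\frac1r=\frac1p+\frac1q$ controls $\|[\rho]_\epsilon\,\textrm{div}\,u\|_{L^r(K)}\le\|\rho\|_{L^p(\Omega)}\|u\|_{W^{1,q}(\Omega)}$. For $I_\epsilon$, I would apply Minkowski's integral inequality to pull the $L^r(K)$-norm inside the $z$- and $s$-integrals, then Hölder pointwise in the shifted arguments together with translation invariance (the shifts $x-s\epsilon z$ stay in $\Omega$ for $x\in K$ and $\epsilon$ small), obtaining $\|I_\epsilon\|_{L^r(K)}\le\big(\sum_{i,j}\int|\partial_i\eta(z)||z_j|\,dz\big)\|\rho\|_{L^p(\Omega)}\|\nabla u\|_{L^q(\Omega)}$. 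The bracketed constant depends only on $\eta$ (so one may even take $c(K)$ independent of $K$), establishing (i); when $\Omega=\mathbb{R}^N$ no shift ever leaves the domain, so $K$ may be replaced by $\mathbb{R}^N$.

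For the convergence (ii) I would show $I_\epsilon\to\rho\,\textrm{div}\,u$ and $[\rho]_\epsilon\,\textrm{div}\,u\to\rho\,\textrm{div}\,u$, so that $r_\epsilon\to0$. The limit of $I_\epsilon$ is pinned down by $\int(\partial_i\eta)(z)z_j\,dz=-\delta_{ij}$ (integrate by parts, $\int\eta=1$): subtracting this constant gives
\[ I_\epsilon-\rho\,\textrm{div}\,u=-\sum_{i,j}\int(\partial_i\eta)(z)z_j\int_0^1\big[\rho(\cdot-\epsilon z)\partial_j u^i(\cdot-s\epsilon z)-\rho\,\partial_j u^i\big]\,ds\,dz, \]
and the bracket tends to $0$ in $L^\theta(K)$ by continuity of translations in Lebesgue spaces, after splitting it as $[\rho(\cdot-\epsilon z)-\rho]\partial_j u^i(\cdot-s\epsilon z)+\rho[\partial_j u^i(\cdot-s\epsilon z)-\partial_j u^i]$ and applying Hölder; dominated convergence in $(z,s)$, with the integrand controlled by the bound from (i), then closes the argument. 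This is also where the exponent restrictions enter: if $p<\infty$ and $q<\infty$ both translation limits hold directly in $L^r$, but if $p=\infty$ one cannot use continuity of translations in $L^\infty$, so, since $K$ is bounded, I would replace $p$ by a finite $s$ with $\rho\in L^s(K)$ and $\frac1\theta=\frac1s+\frac1q$ — which forces exactly $\theta<q$, the stated restriction — while the endpoint $q=\infty$ is absorbed by truncating $\rho$ and using $L^\infty(K)\subset L^{2\theta}(K)$. I expect the main obstacle to be the uniform estimate (i): one must first recognize and extract the exact cancellation that removes the $\epsilon^{-1}$ factor via the fundamental-theorem-of-calculus representation, since no ordinary $L^p$ estimate applies before that; the bookkeeping of endpoint exponents in (ii) is the secondary difficulty.
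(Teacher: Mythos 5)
Your proof is correct, and it takes a genuinely different, more self-contained route than the paper's. The paper treats this lemma as essentially known, citing Lions and Feireisl for part (i) and for part (ii) when $p<\infty$, and writes out only the case $p=\infty$ of (ii); there it argues by density: choose $U$ with $K\subset U\subset\subset\Omega$, observe $\rho\in L^\infty(\Omega)\subset L^{\tilde p}(U)$ with $\tilde p=\frac{q\theta}{q-\theta}$, approximate $\rho$ by $\rho_n\in C_0^\infty(U)$ in $L^{\tilde p}(U)$, and combine the uniform estimate (i) (applied to $\rho-\rho_n$ with exponents $\tilde p,\ q,\ \theta$ on $U$) with the elementary convergence $G_\epsilon(\rho_n)\to 0$ for smooth $\rho_n$. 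Your finite exponent $s=\frac{q\theta}{q-\theta}$ is exactly the paper's $\tilde p$ --- the same exponent-lowering that forces $\theta<q$ --- but you implement the limit through continuity of translations acting on the explicit kernel representation $I_\epsilon$, rather than through smooth approximation plus the commutator bound; and you derive (i) itself from scratch via the substitution $y=x-\epsilon z$ and the fundamental-theorem-of-calculus identity, which is the classical Friedrichs/DiPerna--Lions computation. What your route buys: a complete proof of both parts in all exponent regimes, including the endpoint $q=\infty$ handled by truncation of $\rho$ (which the paper never discusses), plus the sharper observation that the constant in (i) depends only on $\eta$ and not on $K$ (only the smallness threshold for $\epsilon$ depends on $\mathrm{dist}(K,\partial\Omega)$). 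What the paper's route buys: once (i) is granted from the references, the $p=\infty$ case of (ii) is a three-line density argument that sidesteps the measure-theoretic care your representation requires --- a.e.\ validity of the line-integral identity for $W^{1,q}$ functions and Fubini for the $(z,s)$-integrals --- points you correctly flag as routine. Two small repairs to make your write-up airtight: in the $p=\infty$ case you need $\rho\in L^s$ on a neighborhood $U$ of $K$, not merely on $K$, since the translations and convolutions sample values near $K$ (harmless, as $U\subset\subset\Omega$ is bounded); and the final assertion of (ii), that $K$ may be replaced by $\mathbb{R}^N$ when $\Omega=\mathbb{R}^N$ and $p<\infty$, is not addressed explicitly --- it follows from your argument because translation continuity in $L^p(\mathbb{R}^N)$ is global, with the $q=\infty$ case again reduced by approximating $\rho$ in $L^p(\mathbb{R}^N)$ by compactly supported bounded functions.
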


\begin{proof}
Since the proof of most of the results in this lemma can be found in
\cite{Lions 1, Feireisl 4}, here we only prove (ii) for the case
$p=\infty$. To this end, we define
$G_\epsilon(\rho)=[\textrm{div}(\rho
u)]_\epsilon-\textrm{div}([\rho]_\epsilon u)$. Choosing any open
subset $U$ such that $K\subset U\subset\subset\Omega$, then $\rho\in
L^{\infty}(\Omega)$ implies $\rho\in L^{\tilde{p}}(U)$, where
$\tilde{p}=\frac{q\theta}{q-\theta}$ satisfying
$\frac{1}{\tilde{p}}+\frac{1}{q}=\frac{1}{\theta}$. It is easy to
see that $G_\epsilon(\rho)\rightarrow0$ in $L^\theta(\Omega)$ as
$\epsilon\rightarrow0$ for any $\rho\in C^\infty_0(\Omega)$. Now
choosing a sequence $\rho_n\in C_0^\infty(U)$ such that
$\rho_n\rightarrow\rho$ in $L^{\tilde{p}}(U)$ as
$n\rightarrow\infty$, using the result in (i) with $\rho,\ p, \ r$
and $\Omega$ replaced by $\rho-\rho_n,\ \tilde{p},\ \theta$ and $U$,
respectively, we have
$$
\begin{array}[b]{rl}
\|G_\epsilon(\rho)\|_{L^{\theta}(K)}&\leq\|G_\epsilon(\rho-\rho_n)\|_{L^{\theta}(K)}
+\|G_\epsilon(\rho_n)\|_{L^{\theta}(K)}\\[3mm]
&\leq
c(K)\|\rho-\rho_n\|_{L^{\tilde{p}}(U)}\|u\|_{W^{1,q}(U)}+\|G_\epsilon(\rho_n)\|_{L^{\theta}(K)}\\[3mm]
&\leq
c(K)\|\rho-\rho_n\|_{L^{\tilde{p}}(U)}\|u\|_{W^{1,q}(\Omega)}+\|G_\epsilon(\rho_n)\|_{L^{\theta}(\Omega)}\\[3mm]
&\rightarrow 0,\ \textrm{as}\ \epsilon\rightarrow0,\
n\rightarrow\infty.
\end{array}
$$
This completes the proof of Lemma \ref{lem3}.
\end{proof}

\begin{cor}\label{cor1}
 If $\rho\in L^{\infty}((0,\infty)\times\Omega)$ and
$u\in L^2(0,\infty;[W_0^{1,2}(\Omega)]^N) $ solves $(1.1)_1$ in
$\mathscr{D}'((0,\infty)\times\Omega)$.

Then for any subset $[\alpha,\beta]\subset(0,\infty)$, we have
\begin{eqnarray}\label{2.4}
\partial_t[\rho]_\epsilon+\textrm{div}([\rho]_\epsilon u)=r_\epsilon\ \ a.e.\
on\  [\alpha,\beta]\times\Omega,
\end{eqnarray}
where $r_\epsilon=\textrm{div}([\rho]_\epsilon u)-[\textrm{div}(\rho
u)]_\epsilon$. Moreover, $r_\epsilon$ is bounded in
$L^2([\alpha,\beta]\times\Omega)$ uniformly in $\epsilon$ and
converges to 0 strongly in $L^{2}(\alpha,\beta; L^{\theta}(\Omega))$
for all $\theta\in[1,2)$.
\end{cor}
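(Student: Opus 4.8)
The plan is to obtain the identity \eqref{2.4} by mollifying the continuity equation in space and then to control the remainder $r_\epsilon$ by the commutator estimates of Lemma~\ref{lem3}. Throughout I work on a fixed finite interval $(\alpha,\beta)$, on which $\rho\in L^\infty((\alpha,\beta)\times\Omega)\subset L^2((\alpha,\beta)\times\Omega)$ because $\Omega$ and $(\alpha,\beta)$ have finite measure. Hence the hypotheses of Lemma~\ref{lem1} are met, and after prolonging $\rho$ and $u$ by zero outside $\Omega$ the equation $(1.1)_1$ holds in $\mathscr{D}'((\alpha,\beta)\times\mathbb{R}^N)$. I would then convolve this equation in the space variable with $\eta_\epsilon$; since spatial mollification commutes with $\partial_t$, this gives $\partial_t[\rho]_\epsilon+[\mathrm{div}(\rho u)]_\epsilon=0$ in $\mathscr{D}'$. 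Adding and subtracting $\mathrm{div}([\rho]_\epsilon u)$ yields $\partial_t[\rho]_\epsilon+\mathrm{div}([\rho]_\epsilon u)=\mathrm{div}([\rho]_\epsilon u)-[\mathrm{div}(\rho u)]_\epsilon=r_\epsilon$. For each fixed $\epsilon$ the functions $[\rho]_\epsilon$ and $\nabla[\rho]_\epsilon$ are bounded and smooth in $x$, so $\mathrm{div}([\rho]_\epsilon u)=\nabla[\rho]_\epsilon\cdot u+[\rho]_\epsilon\,\mathrm{div}\,u$ and $[\mathrm{div}(\rho u)]_\epsilon=(\nabla\eta_\epsilon)\ast(\rho u)$ both lie in $L^2((\alpha,\beta)\times\mathbb{R}^N)$; consequently $\partial_t[\rho]_\epsilon$ is an $L^2$ function and \eqref{2.4} is an equality of $L^2$ functions, hence holds a.e.\ on $[\alpha,\beta]\times\Omega$.

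For the uniform bound, write $\bar\rho:=\|\rho\|_{L^\infty((0,\infty)\times\Omega)}$ and apply Lemma~\ref{lem3}(i) on the whole space with $p=\infty$, $q=2$, so that $r=2$, using the clause that $K$ may be replaced by $\mathbb{R}^N$ when $\Omega=\mathbb{R}^N$. For a.e.\ fixed $t$ this gives $\|r_\epsilon(t)\|_{L^2(\mathbb{R}^N)}\leq c\,\|\rho(t)\|_{L^\infty}\|u(t)\|_{W^{1,2}}\leq c\,\bar\rho\,\|u(t)\|_{W^{1,2}}$, uniformly in $\epsilon$. Squaring, integrating over $(\alpha,\beta)$, and using $u\in L^2(\alpha,\beta;[W_0^{1,2}(\Omega)]^N)$ bounds $\|r_\epsilon\|_{L^2((\alpha,\beta)\times\Omega)}$ by $c\,\bar\rho\,\|u\|_{L^2(\alpha,\beta;W^{1,2})}$, independently of $\epsilon$.

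For the convergence, fix $\theta\in[1,2)$. For a.e.\ $t$ I would apply Lemma~\ref{lem3}(ii) with $p=\infty$ to $\rho(t),u(t)$; choosing a compact set $K$ with $\bar\Omega\subset K\subset\mathbb{R}^N$ (possible since $\Omega$ is bounded) gives $r_\epsilon(t)\to0$ in $L^\theta(K)$, hence in $L^\theta(\Omega)$, as $\epsilon\to0$. To upgrade this pointwise-in-$t$ convergence to convergence in $L^2(\alpha,\beta;L^\theta(\Omega))$ I would invoke dominated convergence: since $\Omega$ is bounded, Hölder's inequality gives $\|r_\epsilon(t)\|_{L^\theta(\Omega)}\leq|\Omega|^{1/\theta-1/2}\|r_\epsilon(t)\|_{L^2(\Omega)}\leq C\,\bar\rho\,\|u(t)\|_{W^{1,2}}$, so $\|r_\epsilon(t)\|_{L^\theta(\Omega)}^2$ is dominated by the fixed $L^1(\alpha,\beta)$ function $C^2\bar\rho^2\|u(t)\|_{W^{1,2}}^2$; combined with the pointwise convergence this yields $\int_\alpha^\beta\|r_\epsilon(t)\|_{L^\theta(\Omega)}^2\,dt\to0$.

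The main obstacle is the behaviour near $\partial\Omega$: the crude interior estimate of Lemma~\ref{lem3} only yields convergence on compact subsets of the original $\Omega$, which would miss a boundary layer. Passing to the zero extension via Lemma~\ref{lem1} and then applying the whole-space form of Lemma~\ref{lem3} (with a compact $K\supset\bar\Omega$) is exactly what removes this difficulty; the zero extension is legitimate precisely because $u\in W_0^{1,2}$ has vanishing trace. The remaining care lies in the time-integrated passage to the limit, which is handled by the dominated-convergence argument above.
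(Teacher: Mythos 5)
Your proposal is correct and follows essentially the same route as the paper's own proof: zero extension via Lemma~\ref{lem1}, spatial mollification to obtain \eqref{2.4}, the commutator estimates of Lemma~\ref{lem3} for the uniform $L^2$ bound and the pointwise-in-time convergence, and dominated convergence in $t$ to conclude in $L^2(\alpha,\beta;L^\theta(\Omega))$. The paper's argument is simply a terser version of yours, and your added care about the boundary (working on a compact $K\supset\bar\Omega$ after the zero extension, which is legitimate since $u$ has vanishing trace) correctly fills in the details the paper leaves implicit.
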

\begin{proof}
Firstly, by virtue of Lemma \ref{lem1}, the equation $(1.1)_1$ holds
in $\mathscr{D}'((0,\infty)\times\mathbb{R}^N)$ provided $\rho,\ u$
were extended to be zero on $\mathbb{R}^N\backslash\Omega$. Then we
use the mollifier $\eta_\epsilon$ as test functions to deduce
(\ref{2.4}) provided $\epsilon>0$ is small enough. It follows from
Lemma \ref{lem3} immediately that $r_\epsilon$ is bounded in
$L^2([\alpha,\beta]\times\Omega)$ uniformly in $\epsilon$, together
with Lebesgue's dominated convergence theorem, we have
$r_\epsilon\rightarrow0 \ \textrm{in}\ L^{2}(\alpha,\beta;
L^{\theta}(\Omega))$ for $\theta\in[1,2)$, where we have used the
fact that $\Omega$ is bounded.
\end{proof}

\begin{lem}[\cite{Feireisl 2, Feireisl 3}]\label{lem5}
 Let $\Omega$ be a bounded Lipschitz
domain in $\mathbb{R}^N$, $N\geq2$, and $p,\  r\in(1, \infty)$ given
numbers,  $f\in \{L^p(\Omega)|\int_\Omega fdx=0$\}.

Then the problem
\begin{equation}\label{2.5}
\textrm{div}v=f, v|\partial\Omega=0,
\end{equation}
admits a solution operator $\mathcal {B}: f\mapsto v$ enjoying the
following properties:
\begin{itemize}
  \item $\mathcal {B}$ is a linear operator from $L^p(\Omega)$ into
  $[W^{1,p}_0(\Omega)]^N$, i.e.,
  $$
\|\mathcal{B}[f]_{W^{1,p}(\Omega)}\|\leq c(p,
\Omega)\|f\|_{L^p(\Omega)};
  $$
  \item The function $v=\mathcal{B}[f]$ solves the problem (\ref{2.5});
  \item If a function $f\in L^p(\Omega)$ can be written in the form
  $f=divg$ with $g\in [L^r(\Omega)]^N$ and $g\cdot n=0$ on
  $\partial\Omega$, where $n$ is the outward pointing unit normal vector field along $\partial\Omega$,  then
  $$
  \|\mathcal{B}[f]\|_{L^r(\Omega)}\leq c(p, r, \Omega)\|g\|_{L^r(\Omega)}.
  $$
\end{itemize}
\end{lem}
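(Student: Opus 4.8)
The plan is to first establish the claim when $\Omega$ is star-shaped with respect to a ball, where an explicit solution operator is available, and then to pass to a general bounded Lipschitz domain by localization. So suppose first that $\Omega$ is star-shaped with respect to a ball $B=B(x_0,R)$ with $\overline{B}\subset\Omega$, and fix a cutoff $\omega\in C_c^\infty(B)$ with $\int_B\omega\,dx=1$. I would define the candidate operator by Bogovskii's explicit formula,
\[
\mathcal{B}[f](x)=\int_\Omega f(y)\,\frac{x-y}{|x-y|^N}\Big(\int_{|x-y|}^{\infty}\omega\big(y+s\tfrac{x-y}{|x-y|}\big)\,s^{N-1}\,ds\Big)\,dy.
\]
A direct computation (differentiating under the integral and integrating the inner radial integral by parts) gives, in the distributional sense,
\[
\mathrm{div}\,\mathcal{B}[f]=f-\omega\int_\Omega f\,dy=f,
\]
where the last equality uses the compatibility condition $\int_\Omega f\,dx=0$. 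Star-shapedness guarantees that every ray from a point of $B$ meets $\partial\Omega$ exactly once, which is what makes $\mathcal{B}[f]$ vanish on $\partial\Omega$ and, extended by zero, belong to $[W_0^{1,p}(\Omega)]^N$; this settles the second bullet of (\ref{2.5}).

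For the first estimate I would show that $\nabla_x\mathcal{B}[f]$ is a singular integral operator of Calder\'on--Zygmund type: after differentiation its leading kernel is homogeneous of degree $-N$ in $x-y$ and has vanishing mean on spheres, while the remaining terms have integrable (weakly singular) kernels. The Calder\'on--Zygmund theorem then yields $\|\nabla\mathcal{B}[f]\|_{L^p(\Omega)}\le c(p,\Omega)\|f\|_{L^p(\Omega)}$ for every $p\in(1,\infty)$, and since $\mathcal{B}[f]\in[W_0^{1,p}(\Omega)]^N$ the Poincar\'e inequality upgrades this to the full $W^{1,p}$ bound. Linearity of $f\mapsto\mathcal{B}[f]$ is immediate from the formula.

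The third property is the more delicate one. When $f=\mathrm{div}\,g$ with $g\cdot n=0$ on $\partial\Omega$, I would insert $f=\mathrm{div}\,g$ into the defining formula and integrate by parts in $y$ to transfer the divergence onto the kernel; the boundary integral vanishes precisely because $g\cdot n=0$. This produces a representation
\[
\mathcal{B}[\mathrm{div}\,g](x)=\int_\Omega K(x,y)\,g(y)\,dy,
\]
in which $K(x,y)$ is again a Calder\'on--Zygmund kernel homogeneous of degree $-N$, so a second application of the Calder\'on--Zygmund theorem delivers $\|\mathcal{B}[\mathrm{div}\,g]\|_{L^r(\Omega)}\le c(p,r,\Omega)\|g\|_{L^r(\Omega)}$ for all $r\in(1,\infty)$.

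Finally, to remove the star-shapedness I would use that a bounded Lipschitz domain admits a finite open cover $\{U_i\}$ such that each $\Omega\cap U_i$ is star-shaped with respect to a ball. Choosing a partition of unity $\{\psi_i\}$ subordinate to the cover, one writes $f=\sum_i f_i$, where each $f_i$ is obtained from $\psi_i f$ after redistributing mass between overlapping patches so that $\int_{\Omega\cap U_i}f_i\,dx=0$, then sets $\mathcal{B}[f]=\sum_i\mathcal{B}_i[f_i]$ with $\mathcal{B}_i$ the star-shaped operator on $\Omega\cap U_i$, and sums the local estimates. I expect the main obstacle to lie exactly in this last step: arranging the decomposition $f=\sum_i f_i$ so that each piece inherits the zero-mean compatibility condition on its star-shaped patch, \emph{and} — for the third estimate — so that the divergence structure $f=\mathrm{div}\,g$ is preserved piecewise with controlled corrections, is the genuine technical heart of the argument. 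By contrast, the explicit kernel identities and the Calder\'on--Zygmund boundedness in the star-shaped case are standard.
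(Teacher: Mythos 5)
The paper does not actually prove this lemma: it is quoted from the literature, and Remark \ref{rem2.6} explicitly defers the proof to Bogovskii \cite{Bogovskii}, Galdi \cite{Galdi} and Borchers--Sohr \cite{Borchers}. Your outline is precisely the construction carried out in those references, so there is no divergence of ``route'' to report: explicit Bogovskii kernel on a domain star-shaped with respect to a ball, the identity $\mathrm{div}\,\mathcal{B}[f]=f-\omega\int_\Omega f\,dy$, Calder\'on--Zygmund boundedness of $\nabla\mathcal{B}$ for the $W^{1,p}$ estimate, integration by parts against $f=\mathrm{div}\,g$ (using $g\cdot n=0$ to kill the boundary term) for the $L^r$ estimate, and a partition-of-unity reduction of a bounded Lipschitz domain to finitely many star-shaped pieces. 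All of that is correct in substance, including your observation that the support properties of the kernel, not just the vanishing of $\mathcal{B}[f]$ on $\partial\Omega$, are what put $\mathcal{B}[f]$ in $[W^{1,p}_0(\Omega)]^N$.

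The one place where your proposal is genuinely incomplete is the step you yourself flag: the localization. For the first two bullets the standard fix is explicit (Galdi, Lemma~III.3.2): one sets $f_i=\psi_i f-\chi_i\sum_{j<i}(\dots)$ with fixed bump functions $\chi_i$ carrying the excess mass between patches, so that $\int f_i=0$ and $\|f_i\|_{L^p}\le c\|f\|_{L^p}$; this is routine. For the third bullet, however, the divergence structure does \emph{not} localize naively: $\psi_i\,\mathrm{div}\,g=\mathrm{div}(\psi_i g)-g\cdot\nabla\psi_i$, and while $\psi_i g$ still has vanishing normal trace, the correction $g\cdot\nabla\psi_i$ is merely an $L^r$ function with no divergence form, so the star-shaped negative-order estimate cannot be applied to it directly. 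One must observe that these corrections are zeroth-order in $g$ (no derivatives), re-decompose them into mean-zero pieces, and estimate $\mathcal{B}_i$ applied to them either through the $W^{1,r'}$ bound by duality or through a Sobolev embedding, absorbing them as lower-order terms. Since this is exactly the content of the cited references and you have correctly identified it as the technical heart rather than claimed it is trivial, I would call your proposal a faithful and essentially correct sketch of the standard proof, with that final step still owed.
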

\begin{rem}\label{rem2.6}
To our best knowledge, the operator $\mathcal{B}[\cdot]$ was first
constructed by Bogovskii\cite{Bogovskii}. A complete proof of the
above mentioned properties may be found in Galdi \cite{Galdi} or
Borchers and Sohr\cite{Borchers}. Moreover, the operator
$\mathcal{B}[\cdot]$ was first used by Feireisl and Petzeltov\'{a}
\cite{Feireisl 2} to show the existence of weak solutions $(\rho,u)$
to the problem (\ref{1.1})-(\ref{1.3}) with the density $\rho$
square integrable up to the boundary $\partial\Omega$.
\end{rem}
\section{proof of theorem \ref{thm1}}
\begin{lem}\label{lem6}
Let $r_0>0, \bar{r}>0$ and $\gamma>1$ be arbitrary fixed constants,
$\displaystyle f(r)=r\int^r_{r_0}\frac{h^\gamma-r_0^\gamma}{h^2}dh$
for $r\in[0,\bar{r}]$. Then there exists positive constants $K_1$
and $K_2$ depending on $r_0$ and $\bar{r}$, such that
\begin{equation}\label{3.1}
K_1(r-r_0)^2\leq f(r)\leq K_2(r-r_0)^2\ \ for\ all\ r\in[0,\bar{r}].
\end{equation}
\end{lem}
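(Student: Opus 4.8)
The plan is to reduce everything to elementary one–variable analysis: first evaluate the integral so that $f$ becomes an explicit smooth expression whose unique zero and local behaviour can be read off directly, and then obtain the two-sided bound by a compactness argument on the compact interval $[0,\bar{r}]$.

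First I would carry out the integration. Writing $\frac{h^\gamma-r_0^\gamma}{h^2}=h^{\gamma-2}-r_0^\gamma h^{-2}$ and using $\gamma>1$, an antiderivative is $\frac{h^{\gamma-1}}{\gamma-1}+\frac{r_0^\gamma}{h}$, so evaluating from $r_0$ to $r$ and multiplying by $r$ yields
\[
f(r)=\frac{1}{\gamma-1}r^{\gamma}-\frac{\gamma}{\gamma-1}r_0^{\gamma-1}r+r_0^{\gamma}.
\]
Note that the apparent singularity of the integrand at $h=0$ is harmless: after multiplication by $r$ the term $r_0^\gamma/r$ becomes the constant $r_0^\gamma$, so $f$ is genuinely continuous on all of $[0,\bar{r}]$ with $f(0)=r_0^\gamma$. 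This explicit form displays $f$ as the relative energy (Bregman functional) of the strictly convex potential $\rho\mapsto\rho^\gamma/(\gamma-1)$.

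Next I would record the elementary facts $f(r_0)=0$, $f'(r)=\frac{\gamma}{\gamma-1}\bigl(r^{\gamma-1}-r_0^{\gamma-1}\bigr)$ so that $f'(r_0)=0$, and $f''(r)=\gamma r^{\gamma-2}>0$ for $r>0$. Since $\gamma>1$ the map $r\mapsto r^{\gamma-1}$ is strictly increasing, hence $f'<0$ on $(0,r_0)$ and $f'>0$ on $(r_0,\bar{r}]$; thus $r_0$ is the unique minimiser of $f$ on $[0,\bar{r}]$ and $f(r)>0$ for every $r\neq r_0$.

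Finally I would pass to the quotient $g(r):=f(r)/(r-r_0)^2$, which is continuous and strictly positive on $[0,\bar{r}]\setminus\{r_0\}$ by the previous step. The only delicate point is the removable singularity at $r=r_0$: a second–order Taylor expansion (equivalently, two applications of L'Hôpital, legitimate because $f(r_0)=f'(r_0)=0$) gives $g(r)\to\tfrac12 f''(r_0)=\tfrac{\gamma}{2}r_0^{\gamma-2}>0$ as $r\to r_0$, so $g$ extends to a continuous, strictly positive function on the compact interval $[0,\bar{r}]$. Setting $K_1:=\min_{[0,\bar{r}]}g$ and $K_2:=\max_{[0,\bar{r}]}g$, both are attained and satisfy $0<K_1\le K_2<\infty$, and they depend only on $r_0,\bar{r}$ (and the fixed exponent $\gamma$); this is precisely \eqref{3.1}. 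The main obstacle is exactly this endpoint analysis — confirming that the $0/0$ behaviour at $r_0$ yields a finite positive limit and that $g$ does not degenerate at $r=0$ (where $g(0)=r_0^{\gamma-2}>0$) — after which compactness delivers the two uniform constants automatically, and in particular one need not keep track of $f''$ being unbounded near $0$ when $\gamma<2$ or vanishing there when $\gamma>2$.
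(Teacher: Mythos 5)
Your proof is correct and follows essentially the same route as the paper: both form the quotient $g(r)=f(r)/(r-r_0)^2$, resolve the removable singularity at $r=r_0$ via L'H\^opital (obtaining the limit $\tfrac{\gamma}{2}r_0^{\gamma-2}$) and the value $r_0^{\gamma-2}$ at $r=0$, and then extract $K_1,K_2$ from continuity and positivity on the compact interval $[0,\bar r]$. The only difference is cosmetic: you first integrate to get a closed form for $f$ and check its positivity away from $r_0$ via the sign of $f'$, whereas the paper argues directly on the integral form.
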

\begin{proof}
Let
$$
g(r)=\frac{\displaystyle
r\int^r_{r_0}\frac{h^\gamma-r_0^\gamma}{h^2}dh}{(r-r_0)^2}.
$$
It is easy to see that
$$
\lim_{r\to 0}g(r)=\frac{\displaystyle\lim_{r\to
0}r\int^r_{r_0}\frac{h^\gamma-r_0^\gamma}{h^2}dh}{r_0^2}=r_0^{\gamma-2}>0,
$$
Using the l'Hospital rule, we obtain
$$
\lim_{r\to r_0}g(r)=\lim_{r\to
r_0}\frac{\displaystyle\int^r_{r_0}\frac{h^\gamma-r_0^\gamma}{h^2}dh+\frac{r^\gamma-r_0^\gamma}{r}}{2(r-r_0)}
=\frac{\gamma}{2}r_0^{\gamma-2}>0.
$$
Consequently, $g(r)$ is a continuous function on $[0,\bar{r}]$ with
$g(r)>0$, and (\ref{3.1}) follows immediately.
\end{proof}
Now we are going to give the proof of Theorem \ref{thm1}. First of
all, we need to rewrite the energy inequality (\ref{1.5}) as a new
form. To this end, we choose an arbitrary
$\psi(t)\in\mathscr{D}(0,\infty)$ with $\psi(t)\geq0$, then the
energy inequality (\ref{1.5}) is equivalent to
$$
-\int^\infty_0\psi_t\int_\Omega\frac12\rho|u|^2+\frac{\rho^\gamma}{\gamma-1}dxdt
+\int^\infty_0\psi\int_\Omega\mu|\nabla
u|^2+(\lambda+\mu)(\textrm{div}u)^2dxdt\leq0.
$$
Recalling that
$\displaystyle\rho_s=\frac{1}{|\Omega|}\int_\Omega\rho_0dx$ is a
positive constant and $\displaystyle\int_\Omega\rho(t)dx$ is
independent of $t$ due to Lemma \ref{lem2}, we thus have
$$
\int^\infty_0\psi_t\int_\Omega\big(-\frac{\gamma}{\gamma-1}\rho\rho_s^{\gamma}+\rho_s^\gamma\big)dxdt=0.
$$
Adding the above two equations, we have
\begin{equation}\label{3.2}
-\int_0^\infty\psi_t\int_\Omega\big(\frac12\rho|u|^2+\rho\int^\rho_{\rho_s}\frac{h^\gamma-\rho^\gamma_s}{h^2}dh\big)dxdt
+\int^\infty_0\psi\int_\Omega\big(\mu|\nabla
u|^2+(\lambda+\mu)(\textrm{div}u)^2\big)dxdt\leq0.
\end{equation}
Next, we use the operator $\mathcal{B}$ introduced in Lemma
\ref{lem5} to construct test function of the form
$$
\Phi(t,x)=\psi(t)\mathcal{B}[[\rho]_\epsilon-\rho^\epsilon_s],
$$
where $\psi$ is the same as in (\ref{3.2}) and
$\displaystyle\rho^\epsilon_s=\frac{1}{|\Omega|}\int_\Omega[\rho]_\epsilon
dx$. Obviously, $\Phi(t,x)$ is smooth in $x$ and vanishes near
$\partial\Omega$ due to the properties of operator $\mathcal{B}$.
Moreover, since $\rho\in L^\infty(0,\infty;\Omega)$, $\Phi_t$ is in
$L^2(0,\infty;[W^{1,2}_0(\Omega)]^N)$ in view of Corollary
\ref{cor1}. Consequently,  $\Phi$ could be used as a test function
for the equation $(\ref{1.1})_2$. Thus, we have
\begin{equation}\label{3.3}
\begin{array}[b]{rl}
&\displaystyle-\int_0^\infty\psi_t\int_\Omega\rho
u\mathcal{B}[[\rho]_\epsilon-\rho^\epsilon_s]dxdt+\int^\infty_0\psi\int_\Omega\rho
u\mathcal{B}[\textrm{div}([\rho]_\epsilon u)]dxdt\\[3mm]
&\displaystyle-\int^\infty_0\psi\int_\Omega\rho
u\mathcal{B}[r_\epsilon-\frac{1}{|\Omega|}\int_\Omega r_\epsilon
dx]dxdt-\int^\infty_0\psi\int_\Omega\rho u\otimes
u:\nabla\mathcal{B}[[\rho]_\epsilon-\rho^\epsilon_s]dxdt\\[3mm]
&\displaystyle-\int^\infty_0\psi\int_\Omega(P(\rho)-P(\rho_s))([\rho]_\epsilon-\rho_s^\epsilon)dxdt+
\mu\int^\infty_0\psi\int_\Omega\nabla
u:\nabla\mathcal{B}[[\rho]_\epsilon-\rho_s^\epsilon]dxdt\\[3mm]
&\displaystyle+(\lambda+\mu)\int^\infty_0\psi\int_\Omega
\textrm{div}u([\rho]_\epsilon-\rho_s^\epsilon)dxdt=0,
\end{array}
\end{equation}
where we have used Corollary $\ref{cor1}$. Letting $\epsilon\to 0$,
we obtain by virtue of Corollary $\ref{cor1}$ and the properties of
operator $\mathcal{B}$,
\begin{equation}\label{3.4}
\begin{array}[b]{rl}
&\displaystyle-\int_0^\infty\psi_t\int_\Omega\rho
u\mathcal{B}[\rho-\rho_s]dxdt+\int^\infty_0\psi\int_\Omega\rho
u\mathcal{B}[\textrm{div}(\rho u)]dxdt\\[3mm]
&\displaystyle-\int^\infty_0\psi\int_\Omega\rho u\otimes
u:\nabla\mathcal{B}[\rho-\rho_s]dxdt\\[3mm]
&\displaystyle-\int^\infty_0\psi\int_\Omega(P(\rho)-P(\rho_s))(\rho-\rho_s)dxdt+
\mu\int^\infty_0\psi\int_\Omega\nabla
u:\nabla\mathcal{B}[\rho-\rho_s]dxdt\\[3mm]
&\displaystyle+(\lambda+\mu)\int^\infty_0\psi\int_\Omega
\textrm{div}u(\rho-\rho_s)dxdt=0,
\end{array}
\end{equation}
Multiplying (\ref{3.4}) by a negative constant $-\sigma$ with
$0<\sigma\ll1$ and summing up the resulting equation with the energy
inequality (\ref{3.2}), we get
\begin{equation}\label{3.5}
\begin{array}[b]{rl}
&\displaystyle-\int_0^\infty\psi_t\int_\Omega\big(\frac12\rho|u|^2
+\rho\int^\rho_{\rho_s}\frac{h^\gamma-\rho^\gamma_s}{h^2}dh-\sigma
\rho u\mathcal{B}[\rho-\rho_s]\big)dxdt\\[3mm]
&\displaystyle+\int^\infty_0\psi\int_\Omega\big(\mu|\nabla
u|^2+(\lambda+\mu)(\textrm{div}u)^2\big)dxdt-\sigma\int^\infty_0\psi\int_\Omega\rho
u\mathcal{B}[\textrm{div}(\rho u)]dxdt\\[3mm]
&\displaystyle+\sigma\int^\infty_0\psi\int_\Omega\rho u\otimes
u:\nabla\mathcal{B}[\rho-\rho_s]dxdt+\sigma
\int^\infty_0\psi\int_\Omega(\rho^\gamma-\rho_s^\gamma)(\rho-\rho_s)dxdt\\[3mm]
&\displaystyle-\sigma\mu\int^\infty_0\psi\int_\Omega\nabla
u:\nabla\mathcal{B}[\rho-\rho_s]dxdt-\sigma(\lambda+\mu)\int^\infty_0\psi\int_\Omega
\textrm{div}u(\rho-\rho_s)dxdt\leq0.
\end{array}
\end{equation}
Let
$$
V_\sigma=\int_\Omega\big(\frac12\rho|u|^2
+\rho\int^\rho_{\rho_s}\frac{h^\gamma-\rho^\gamma_s}{h^2}dh-\sigma
\rho u\mathcal{B}[\rho-\rho_s]\big)dx,
$$
and
$$
\begin{array}[b]{rl}
W_\sigma=&\displaystyle\int_\Omega\big(\mu|\nabla
u|^2+(\lambda+\mu)(\textrm{div}u)^2\big)dx-\sigma\int_\Omega\rho
u\mathcal{B}[\textrm{div}(\rho u)]dx\\[3mm]
&\displaystyle+\sigma\int_\Omega\rho u\otimes
u:\nabla\mathcal{B}[\rho-\rho_s]dx +\sigma
\int_\Omega(\rho^\gamma-\rho_s^\gamma)(\rho-\rho_s)dx\\[3mm]
&\displaystyle-\sigma\mu\int_\Omega\nabla
u:\nabla\mathcal{B}[\rho-\rho_s]dx-\sigma(\lambda+\mu)\int_\Omega
\textrm{div}u(\rho-\rho_s)dx.
\end{array}
$$
Using the fact $\rho\leq\bar{\rho}$ and the properties of operator
$\mathcal{B}$, we have
\begin{equation}\label{3.6}
\big|\int_\Omega-\sigma \rho u\mathcal{B}[\rho-\rho_s]\big)dx\big|
\leq\frac{\sigma}{2}\int_\Omega\rho|u|^2dx+\frac{\sigma\bar{\rho}c(\Omega)}{2}\int_\Omega(\rho-\rho_s)^2dx.
\end{equation}
Then it follows from $(\ref{3.6})$ and Lemma \ref{lem6} that
\begin{equation}\label{3.7}
c_0(\sigma, \bar{\rho},
\Omega)\int_\Omega\rho|u|^2+(\rho-\rho_s)^2dx\leq V_\sigma\leq
c_1(\sigma, \bar{\rho}, \Omega)\int_\Omega|u|^2+(\rho-\rho_s)^2dx,
\end{equation}
provided $\sigma$ is small enough.

On the other hand, by H\"{o}lder's inequality, we have
$$
\begin{array}[b]{rl}
\big|-\sigma\int_\Omega\rho u\mathcal{B}[\textrm{div}(\rho
u)]dx\big|&\leq\sigma\|\rho
u\|_{L^2(\Omega)}\|\mathcal{B}[\textrm{div}(\rho
u)]\|_{L^2(\Omega)}\\[3mm]
&\leq\sigma c(\Omega)\|\rho u\|_{L^2(\Omega)}^2\leq\sigma
c(\Omega)\bar{\rho}^2\|u\|_{L^2(\Omega)}^2,
\end{array}
$$
$$
\begin{array}[b]{rl}
\big|\sigma\int_\Omega\rho u\otimes
u:\nabla\mathcal{B}[\rho-\rho_s]dx\big|&\displaystyle\leq\sigma\bar{\rho}\big(\int_\Omega|u|^{2p}dx\big)^{\frac{1}{p}}
\big(\int_\Omega|\nabla\mathcal{B}[\rho-\rho_s]|^{q}dx\big)^{\frac{1}{q}}\\[3mm]
&\displaystyle\leq\sigma\bar{\rho}c(\Omega)\int_\Omega|\nabla
u|^2dx\big(\int_\Omega|\rho-\rho_s|^{q}dx\big)^{\frac{1}{q}}\\[3mm]
&\displaystyle\leq\sigma\bar{\rho}^2c(\Omega)\int_\Omega|\nabla
u|^2dx,
\end{array}
$$
where we take
$
p=
  \begin{cases}
   \frac{N}{N-2}& \text{if $N\geq 3$},\\
   2& \text{if $N=2$}
  \end{cases}
$ and $\frac{1}{p}+\frac{1}{q}=1$.

$$
\begin{array}[b]{rl}
\big|-\sigma\mu\int_\Omega\nabla
u:\nabla\mathcal{B}[\rho-\rho_s]dx\big|&\displaystyle\leq\frac{\mu}{4}\int_\Omega|\nabla
u|^2dx+\sigma^2\mu\int_\Omega|\nabla\mathcal{B}[\rho-\rho_s]|^2dx\\[3mm]
&\displaystyle\leq\frac{\mu}{4}\int_\Omega|\nabla u|^2dx+\sigma^2\mu
c(\Omega)\int_\Omega|\rho-\rho_s|^2dx,
\end{array}
$$
$$
\big|-\sigma(\lambda+\mu)\int_\Omega
\textrm{div}u(\rho-\rho_s)dx\big|\leq\frac{\lambda+\mu}{4}\int_{\Omega}|\textrm{div}u|^2dx
+\sigma^2(\lambda+\mu)\int_\Omega|\rho-\rho_s|^2dx,
$$
and it is easy to see that,
$$
\sigma\int_\Omega(\rho^\gamma-\rho^\gamma_s)(\rho-\rho_s)dx\geq
\sigma c\int_\Omega|\rho-\rho_s|^2dx.
$$
In view of the above five estimates, we have
\begin{equation}\label{3.8}
W_\sigma\geq c_2(\sigma, \bar{\rho},
\Omega)\int_\Omega|u|^2+(\rho-\rho_s)^2dx,
\end{equation}
provided $\sigma$ is small enough.

Therefore, from (\ref{3.7}) and (\ref{3.8}), we deduce that for
appropriate selected $\sigma\ll1$, there exists positive constant
$C(\bar{\rho},\Omega)$, such that
\begin{equation}\label{3.9}
C(\bar{\rho},\Omega)V_\sigma\leq W_\sigma.
\end{equation}
Combining (\ref{3.9}) with (\ref{3.5}), we obtain
\begin{equation}\label{3.10}
-\int_0^\infty\psi_t(t)V_\sigma(t)dt+C(\bar{\rho},\Omega)\int_0^\infty\psi(t)V_\sigma(t)dt\leq0,
\end{equation}
for any $\psi\in\mathscr{D}(0,\infty)$ whit $\psi\geq0.$ \par Let
$[\alpha,\beta]$ be any compact subset of $(0,\infty)$, taking
$\psi(t)=\eta_\epsilon(t-\cdot)$ in (\ref{3.10}), we have
\begin{equation}\label{3.11}
\partial_t[V_\sigma]_\epsilon+C(\bar{\rho},\Omega)[V_\sigma]_\epsilon\leq0,\
\textrm{a.e.}\ t\in[\alpha,\beta],
\end{equation}
provided $\epsilon$ is small enough.

Thus
$$
[V_\sigma]_\epsilon(t)\leq[V_\sigma]_\epsilon(s)\exp\{-C(\bar{\rho},\Omega)(t-s)\}
$$
for a.e. $0<s<t<\infty$, according to (\ref{3.10}). Recalling that
$V_\sigma(t)\in L_{loc}^{\infty}(0,\infty)$, letting $\epsilon\to0$,
we have
\begin{equation}\label{3.12}
V_\sigma(t)\leq V_\sigma(s)\exp\{-C(\bar{\rho},\Omega)(t-s)\}\leq
C(E_0, \bar{\rho})\exp\{-C(\bar{\rho},\Omega)t\},
\end{equation}
where $E_0$ denotes the initial energy.

Consequently, (\ref{1.6}) follows from (\ref{3.7}) and (\ref{3.12})
immediately. This completes the proof of Theorem\ref{thm1}.

{\hfill $\square$\medskip}

\section*{Acknowledgements}   This work is partially supported by NSFC grant No.10871175,
10931007, 10901137, Zhejiang Provincial Natural Science Foundation
of China Z6100217,  and SRFDP No. 20090101120005.

\end{document}